\titleformat*{\section}{\large\bfseries}
\newtheorem{theorem}{Theorem}[section]
\newtheorem{definition}[theorem]{Definition}
\newtheorem{example}[theorem]{Example}
\newtheorem{remark}[theorem]{Remark}
\numberwithin{equation}{section}
\title{A new approach to coincidence and common fixed points under a homotopy of families of mappings in $b$-metric spaces}
\author{\large Anuradha Gupta and Manu Rohilla$^*$}
\date{}
\begin{document}
\maketitle
\begin{abstract}
In this paper we derive coincidence and common fixed point results under order homotopies of families of mappings  in preordered $b$-metric spaces.\\
\textbf{Mathematics Subject Classification:} 54H25, 06A06. \\
\textbf{Keywords:} $b$-metric space,   common fixed point, concordantly isotone mappings, coincidence point, preorder, order homotopy.
\end{abstract}   
\section{Introduction and Preliminaries}
Throughout this paper, $\mathbb{N}$ denotes the set of natural numbers. Bakhtin \cite{3} and Czerwik \cite{4} generalized the concept of metric spaces and introduced the notion of $b$-metric spaces as follows:
\begin{definition}
\emph{Let $X$ be a nonempty set. A $b$-metric is a function $d: X \times X \rightarrow [0, \infty)$ such that for all $x,y,z \in X$ and a constant $s \geq 1$, the following conditions are satisfied:}

\emph{(i) $d(x,y)=0$ if and only if $x=y$, }

\emph{(ii) $d(x,y)=d(y,x)$,}

\emph{(iii) $d(x,y) \leq s[d(x,z)+d(z,y)]$.}\\
\emph{The pair $(X,d)$ is called a $b$-metric space and the number $s$ is called the coefficient of $(X,d)$. If we take $s=1$, then $(X,d)$ reduces to a metric space.}
\end{definition} 
Let $X$ be a nonempty set equipped with a binary relation $\preccurlyeq$. Then the relation $\preccurlyeq$ is

(i) \emph{reflexive}: if $x \preccurlyeq x$ for all $x \in X$,

(ii) \emph{antisymmetric}: if $x \preccurlyeq y$ and $y \preccurlyeq x$, then $x=y$ for all $x,y \in X$,

(iii) \emph{transitive}: if $x \preccurlyeq y$ and $y \preccurlyeq z$, then $x \preccurlyeq z$ for all $x,y,z \in X$.

A binary relation $\preccurlyeq$ is said to be a \emph{preorder} if it is reflexive and transitive. A preorder is said to be a \emph{partial order} if it is antisymmetric. Let $(X,d,\preccurlyeq)$ be a $b$-metric space with coefficient $s \geq 1$ equipped with a binary relation $\preccurlyeq$.  Motivated by Kamihigashi and Stachurski \cite{7} we define the following axiom:

 $d$ is $s$-\emph{regular} if for all $x,y,z \in $ we have
$$x \preccurlyeq y \preccurlyeq z \quad \mbox{implies} \quad \max\{d(x,y),d(y,z)\} \leq s^2 d(x,z).$$

Evidently, if $d$ is regular in the sense of Kamihigashi and Stachurski \cite{7}, then $d$ is $s$-regular but the converse is not true as illustrated by the following example: 

Let $X=[0,\infty)$ and define $d:X \times X \rightarrow [0, \infty)$ by $$d(x,y)=\left\{\begin{array}{cc}
(x+y)^2,& \mbox{if}\thinspace \thinspace x \neq y,\\
0, &\mbox{if} \thinspace \thinspace x=y.
\end{array}
\right.$$
Then $(X,d)$ is a $b$-metric space with coefficient $s=2$. Suppose that $X$ is equipped with a preorder relation defined by: for $x,y \in X$, $x \preccurlyeq y$ if and only if $x \leq y$. It can be easily observed that if $x \preccurlyeq y \preccurlyeq z$, then $\max\{d(x,y),d(y,z)\} \leq 4 d(x,z)$ for all $x,y,z \in X$. This implies that $d$ is $s$-regular. Also, $1 \preccurlyeq 2 \preccurlyeq 3$ but $\max \{d(1,2),d(2,3)\}=25 \nleq d(1,3)$. Therefore, $d$ is not regular.

 Kamihigashi and Stachurski \cite{7} and Batsari et al. \cite{2} established fixed point results of order-preserving self mappings in spaces equipped with a transitive binary relation and some distance measures. Recently, Fomenko and Podoprikhin \cite{5,9} generalized the results of Arutyunov \cite{11,1} to the case of families of multivalued mappings in partially ordered sets. They \cite{6,9} introduced the notion of order homotopy and proved that common fixed point and coincidence point properties are preserved under homotopies of  families of mappings in ordered sets. 

In this paper we establish results on preservation of  coincidence points and common fixed points under order homotopies  of families of mappings. The main objective of the paper is to develop a new approach  and obtain sufficient conditions to gurantee the existence of coincidence and common fixed points under order homotopies of families of mappings in preordered $s$-regular $b$-metric spaces. We have generalized the results of \cite{6,8} to the case of  preordered $s$-regular $b$-metric spaces and preordered regular metric spaces. This approach has enabled us to  replace the classical approach of considering the underlying  space to be complete by a preordered space and $b$-metric  function satisfying the axiom of $s$-regularity. As a consequence we establish coincidence and common fixed points results under order homotopic families of mappings in the context of metric spaces and $b$-metric spaces from a unique point of view. An example demonstrating the utility of the results obtained is also provided.
\section{Coincidence point results}
In this section we prove some coincidence point results under a homotopy of families of mappings in preordered $s$-regular $b$-metric spaces. As a consequence, we formulate coincidence and fixed point results  in preordered regular  metric spaces.

For formulating the results, we recall some definitions and notions introduced by Fomenko and Podoprikhin \cite{6,8}.
\begin{definition}
\emph{Let $(X,\preccurlyeq)$ be a preordered set. A mapping $T:X \rightarrow X$} covers (covers from above) \emph{a mapping $S:X \rightarrow X$ on a set $A \subset X$ if and only if for any $x \in A$ such that $S(x) \preccurlyeq T(x)$ ($S(x) \succcurlyeq T(x)$) then there exists an element $y \in X$ satisfying $y\preccurlyeq x$ ($y \succcurlyeq x$) and $T(y)=S(x)$. If $A=X$, then we say that $T$} covers (covers from above) \emph{$S$.}
\end{definition}
Let $(X,\preccurlyeq)$ be a preordered set. A mapping $T:X \rightarrow X$ is said to be \emph{isotone} if for all $x, y \in X$
$$x\preccurlyeq y \quad \mbox{implies} \quad T(x) \preccurlyeq T(y).$$

Let $T,S : X \rightarrow X$ be a pair of self mappings on a preordered set $(X,\preccurlyeq)$. Let $\mathcal{C}(T,S,\preccurlyeq)$ denote the set of chains $C$ such that for all $x,y \in C$ we have

(i) $T(x) \preccurlyeq S(x)$,

(ii) $x \prec y$ implies $S(x) \preccurlyeq T(y)$, 

(iii) $S(C) \subset T(X)$.

Let $\mathcal{C}^*(T,S,\preccurlyeq)$ denote the set of chains $C$ such that for all $x,y \in C$ we have

(i) $T(x) \succcurlyeq S(x)$,

(ii) $x \prec y$ implies $T(x) \preccurlyeq S(y)$, 

(iii) $S(C) \subset T(X)$.

Recall that the dual order of $\preccurlyeq$ is the order $\preccurlyeq^*$ defined by: $x \preccurlyeq y$ if and only if $y \preccurlyeq^* x$. It can be easily observed that $\mathcal{C}^*(T,S,\preccurlyeq)=\mathcal{C}(T,S,\preccurlyeq^*)$. For a fixed element $x_0 \in X$ define the set
\begin{align*}
O_X(x_0)&:= \{x \in X: x\preccurlyeq x_0\},\\
O_X^*(x_0)&:=\{x \in X: x \succcurlyeq x_0\},\\
\mathcal{C}(x_0,T,S,\preccurlyeq)&:=\{C \in \mathcal{C}(T,S,\preccurlyeq):C \subset O_X(x_0) \thinspace \thinspace \mbox{and} \thinspace \thinspace S(C) \subset T(O_X(x_0)) \},\\
\mathcal{C}^*(x_0,T,S,\preccurlyeq)&:=\{C \in \mathcal{C}^*(T,S,\preccurlyeq):C \subset O_X^*(x_0) \thinspace \thinspace \mbox{and} \thinspace \thinspace S(C) \subset T(O_X^*(x_0)) \},\\
\mbox{Coin}(T,S)&:=\{x \in X: Tx=Sx\},\\
\mbox{Fix}(T)&:=\{ x \in X: Tx=x \}.
\end{align*}

Recall that an element $x \in X$ is said to a \emph{minimal} (\emph{maximal}) element of $X$ if and only if for every other element $y \in X$ we have $x\preccurlyeq y$ ($x \succcurlyeq y$).
\begin{theorem}\label{theorem1}
Let $(X,d,\preccurlyeq)$ be a preordered $s$-regular $b$-metric space. Let $T,S:X \rightarrow X$ be self mappings and there exists $x_0 \in X$ such that $T(x_0) \preccurlyeq S(x_0)$. Suppose that the following conditions are satisfied:

(i) the mapping $T$ is isotone,

(ii) the mapping $S$ covers the mapping $T$ on the set $O_X(x_0)$,

(iii) any chain $C \in \mathcal{C}(x_0,T,S,\preccurlyeq)$ has a lower bound $w \in X$ satisfying $w \preccurlyeq T(w)$ and there exists $z \in X$ such that 
\begin{align*}
&S^i(z) \preccurlyeq S(w) \preccurlyeq T(w) \quad \mbox{for all} \thinspace \thinspace \thinspace  i \in \mathbb{N},\\
&d(T^i(w),S^i(z)) \rightarrow 0 \quad \mbox{as} \thinspace \thinspace \thinspace i \rightarrow \infty.
\end{align*}
Then the set $ \mbox{Coin}(T,S) \cap O_X(x_0)$ is nonempty and contains a minimal element.
\end{theorem}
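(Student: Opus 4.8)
The plan is to realize the desired coincidence point as a lower bound of a maximal chain in $\mathcal{C}(x_0,T,S,\preccurlyeq)$, and then to read off minimality from the maximality of that chain; the coincidence itself will fall out of two applications of the $s$-regularity axiom. First I would check that $\mathcal{C}(x_0,T,S,\preccurlyeq)$ is nonempty. Since $T(x_0)\preccurlyeq S(x_0)$ and $S$ covers $T$ on $O_X(x_0)$, there is $x_1\preccurlyeq x_0$ with $S(x_1)=T(x_0)$; because $T$ is isotone, $T(x_1)\preccurlyeq T(x_0)=S(x_1)$, and $S(x_1)=T(x_0)\in T(O_X(x_0))$, so the singleton $\{x_1\}$ lies in $\mathcal{C}(x_0,T,S,\preccurlyeq)$. (Iterating the covering produces a descending sequence $x_{n+1}\preccurlyeq x_n$ with $S(x_{n+1})=T(x_n)$, which one checks satisfies (i)--(iii), so the class is genuinely populated.) I would then order $\mathcal{C}(x_0,T,S,\preccurlyeq)$ by inclusion: the union of an inclusion-chain of members is again a $\preccurlyeq$-chain, and conditions (i), (ii), (iii) together with the containments $C\subset O_X(x_0)$ and $S(C)\subset T(O_X(x_0))$ are pairwise or pointwise and hence inherited by the union. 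Zorn's lemma then yields a maximal chain $C$.

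By hypothesis (iii), $C$ has a lower bound $w$ with $w\preccurlyeq T(w)$, together with a $z$ for which $S^i(z)\preccurlyeq S(w)\preccurlyeq T(w)$ for all $i$ and $d(T^i(w),S^i(z))\to 0$. Being a lower bound of a nonempty $C\subset O_X(x_0)$, we have $w\preccurlyeq x_0$, so $w\in O_X(x_0)$. Now comes the heart of the argument: since $w\preccurlyeq T(w)$ and $T$ is isotone, the sequence $\{T^i(w)\}$ is ascending, so $T(w)\preccurlyeq T^i(w)$ for every $i\ge 1$. Applying $s$-regularity to the triple $S^i(z)\preccurlyeq T(w)\preccurlyeq T^i(w)$ gives
\[
d(S^i(z),T(w))\le s^2\, d(S^i(z),T^i(w))=s^2\, d(T^i(w),S^i(z))\to 0,
\]
and applying it again to $S^i(z)\preccurlyeq S(w)\preccurlyeq T(w)$ gives
\[
d(S(w),T(w))\le s^2\, d(S^i(z),T(w))\to 0.
\]
Hence $d(S(w),T(w))=0$, so $S(w)=T(w)$ and $w\in\mathrm{Coin}(T,S)\cap O_X(x_0)$, which is therefore nonempty.

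For minimality I would argue from maximality of $C$. If some $v\in\mathrm{Coin}(T,S)\cap O_X(x_0)$ lay strictly below $w$, then $v$ would be $\preccurlyeq$-comparable to, and below, every element of $C$; since $T(v)=S(v)$, the chain conditions are trivially met at $v$ (namely $T(v)\preccurlyeq S(v)$ by reflexivity, $S(v)=T(v)\preccurlyeq T(x)$ for $v\prec x$ by isotonicity, and $S(v)=T(v)\in T(O_X(x_0))$), so $C\cup\{v\}$ would again belong to $\mathcal{C}(x_0,T,S,\preccurlyeq)$, contradicting maximality unless $v$ is already $\preccurlyeq$-equivalent to $w$. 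This identifies $w$ as a minimal element of $\mathrm{Coin}(T,S)\cap O_X(x_0)$ in the sense of the preorder. The step I expect to be the main obstacle is precisely this last one: phrasing and securing minimality in a \emph{preorder} rather than a partial order, where $v\preccurlyeq w\preccurlyeq v$ need not force $v=w$, so one must verify carefully that appending $v$ genuinely enlarges the maximal chain; the secondary point needing care is confirming that the union of an inclusion-chain of members of $\mathcal{C}(x_0,T,S,\preccurlyeq)$ stays within that class so that Zorn's lemma applies.
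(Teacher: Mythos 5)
Your proof is correct, and its overall strategy matches the paper's (maximal chain, lower bound from hypothesis (iii), $s$-regularity to force the coincidence, maximality of the chain to force minimality), but the two key technical steps are realized differently, and in both places your version is arguably cleaner. For the maximal chain, the paper does not order $\mathcal{C}(x_0,T,S,\preccurlyeq)$ by inclusion; instead it introduces the set $V=\{x\in O_X(x_0): T(x)\preccurlyeq S(x),\ S(x)\in T(O_X(x_0))\}$, equips it with the auxiliary partial order $v_1\sqsubseteq v_2$ iff $v_1=v_2$ or ($v_1\prec v_2$ and $S(v_1)\preccurlyeq T(v_2)$), and extracts a maximal $\sqsubseteq$-chain via the Hausdorff maximal principle, then checks that such a chain lies in $\mathcal{C}(x_0,T,S,\preccurlyeq)$; your Zorn-on-inclusion argument buys you a shorter setup at the cost of the (correctly identified and correctly discharged) verification that unions of inclusion-chains of members stay in the class. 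For the coincidence, the paper first applies the $b$-metric triangle inequality twice and then $s$-regularity, arriving at $d(T(w),S(w))\leq (s^6+s^4+2s^2)\,d(T^i(w),S^i(z))$, whereas you apply $s$-regularity twice to the triples $S^i(z)\preccurlyeq T(w)\preccurlyeq T^i(w)$ and $S^i(z)\preccurlyeq S(w)\preccurlyeq T(w)$, getting the sharper constant $s^4$ and avoiding the triangle inequality altogether. Your minimality argument (adjoin $v$ to the maximal chain) mirrors the paper's (show $u\sqsubseteq w\sqsubseteq x$, conclude $u\in C$ by maximality, hence $w\preccurlyeq u$, contradiction); the preorder subtlety you flag — that $v\preccurlyeq w\preccurlyeq v$ with $v\neq w$ is possible — is equally present in the paper's own proof, which implicitly reads $u\prec w$ as ``$u\preccurlyeq w$ and not $w\preccurlyeq u$'' so that $w\preccurlyeq u$ is a genuine contradiction; with that reading your contradiction closes in exactly the same way, so this is a shared convention rather than a gap in your argument.
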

\begin{proof}
The proof is divided into four steps where the existence of an element in $ \mbox{Coin}(T,S) \cap O_X(x_0)$ is proven in Steps $1$ to Step $3$ and its minimality is established in Step $4$. \\
\textbf{Step 1} Set 
$$V=\{x \in O_X(x_0): T(x)\preccurlyeq S(x), \thinspace Sx \in T(O_X(x_0)) \}.$$
We show that $V$ is a nonempty set. Since $\preccurlyeq$ is reflexive, $x_0 \in O_X(x_0)$. Also, $T(x_0) \preccurlyeq S(x_0)$. By (ii) there exists $x_1 \in X$ satisfying 
$$x_1 \preccurlyeq x_0 \quad \mbox{and} \quad T(x_0)=S(x_1).$$
Since $T$ is isotone, $T(x_1) \preccurlyeq S(x_1)$. Also, $S(x_1)=T(x_0) \in T(O_X(x_0))$ which implies that $x_1 \in V$. Define an ordered relation on $V$ as follows: $v_1 \sqsubseteq v_2$ if either $v_1=v_2$ or $v_1 \prec v_2$ and $S(v_1) \preccurlyeq T(v_2)$. It is easily seen that $\sqsubseteq$ is reflexive and antisymmetric. Suppose that $v_1,v_2,v_3 \in V$, $v_1 \sqsubseteq v_2$ and $v_2 \sqsubseteq v_3$. If $v_1=v_2$ or $v_2=v_3$, then evidently, $v_1 \sqsubseteq v_3$. If $v_1 \prec v_2$, $v_2 \prec v_3$, $S(v_1) \preccurlyeq T(v_2)$ and $S(v_2) \preccurlyeq T(v_3)$, then $v_1 \prec v_3$. Also, $v_2 \in V$ gives $T(v_2) \preccurlyeq S(v_2)$ which implies that $S(v_1) \preccurlyeq T(v_3)$. Therefore, $v_1 \sqsubseteq v_3$ which gives $\sqsubseteq$ is a partial order on $V$. By Hausdorff maximal principle, there exists a maximal chain $C \subset V$ with respect to the order $\sqsubseteq$.\\
\textbf{Step 2} We show that $C \in \mathcal{C}(x_0,T,S,\preccurlyeq)$. By the definition of V we have $C \subset  O_X(x_0)$, $S(C) \subset T(O_X(x_0))$ and $T(x) \preccurlyeq S(x)$ for all $x \in C$. Let $x,y \in C$ and $x \prec y$. Since $C$ is a chain, $x \sqsubseteq y$ which gives $S(x) \preccurlyeq T(y)$. Therefore, $C \in \mathcal{C}(x_0,T,S,\preccurlyeq)$. By (iii), there exists a lower bound $w \in X$ of $C$ with respect to the order $\preccurlyeq$.\\
\textbf{Step 3} It is to be shown  that $w \in  \mbox{Coin}(T,S) \cap O_X(x_0)$.  Since $w$ is a lower bound of $C$, $w \preccurlyeq x$ for all $x \in C$. Also, $C \subset O_X(x_0)$ and $\preccurlyeq$ is transitive. Therefore, $w \preccurlyeq x_0$ which implies that $w \in O_X(x_0)$. Consider
\begin{align}\label{equation1}
d(T(w),S(w)) & \leq sd(T(w),T^i(w))+sd(T^i(w),S(w)) \nonumber \\
&\leq s^2 d(T(w),S^i(z))+2s^2d(T^i(w),S^i(z))+s^2d(S^i(z),S(w)).
\end{align}
Since $S^i(z) \preccurlyeq S(w) \preccurlyeq T(w)$ for all $i \in \mathbb{N}$ and $d$ is $s$-regular, $d(S^i(z),S(w)) \leq s^2 d(S^i(z),T(w))$. Therefore, (\ref{equation1}) becomes
\begin{align}\label{equation2}
d(T(w),S(w)) \leq (s^4+s^2)d(S^i(z),T(w))+2s^2d(T^i(w),S^i(z)).
\end{align} Also, $w \preccurlyeq T(w)$ and $T$ is isotone gives $T(w) \preccurlyeq T^2(w)$. Continuing likewise we get $w \preccurlyeq T(w) \preccurlyeq T^i(w)$ for all $i \in \mathbb{N}$. Therefore, $S^i(z) \preccurlyeq T(w) \preccurlyeq T^i(w)$ which gives $d(S^i(z),T(w)) \leq s^2 d(S^i(z),T^i(w))$. Using (\ref{equation2}) we have
\begin{align*}
d(T(w),S(w)) \leq (s^6+s^4+2s^2)d(T^i(w),S^i(z)).
\end{align*}
Letting $i \rightarrow \infty$ we get $T(w)=S(w)$. Therefore, $w \in  \mbox{Coin}(T,S) \cap O_X(x_0)$.\\
\textbf{Step 4} We claim that $w$ is a minimal element of the set $ \mbox{Coin}(T,S) \cap O_X(x_0)$. Assume on the contrary, there exists $u \in \mbox{Coin}(T,S) \cap O_X(x_0) $ satisfying $u \prec w$. Since $T$ is isotone, $T(u) \preccurlyeq T(w)$ which implies that $S(u) \preccurlyeq T(w)$. Therefore, $u \sqsubseteq w$. As $w$ is a lower bound of $C$, $w \preccurlyeq x$ for all $x \in C$. Therefore, $T(w) \preccurlyeq T(x)$ which implies that $S(w) \preccurlyeq T(w)$. This gives $u \sqsubseteq w \sqsubseteq x$. As $C$ is a maximal chain with respect to $\sqsubseteq$, $u \in C$ which gives $w \preccurlyeq u$, a contradiction. Hence, $w$ is a minimal element of the set $ \mbox{Coin}(T,S) \cap O_X(x_0)$.
\end{proof}
The dual version of Theorem \ref{theorem1} can be stated as follows:
\begin{theorem}\label{theorem3}
Let $(X,d,\preccurlyeq)$ be a preordered $s$-regular $b$-metric space. Let $T,S:X \rightarrow X$ be self mappings and there exists $x_0 \in X$ such that $T(x_0) \succcurlyeq S(x_0)$. Suppose that the following conditions hold:

(i) the mapping $T$ is isotone,

(ii) the mapping $S$ covers the mapping $T$ from above on the set $O_X^{*}(x_0)$,

(iii) any chain $C \in \mathcal{C}^*(x_0,T,S,\preccurlyeq)$ has an upper bound $w \in X$ satisfying $w \succcurlyeq T(w)$ and there exists $z \in X$ such that 
\begin{align*}
&S^i(z) \succcurlyeq S(w) \succcurlyeq T(w) \quad \mbox{for all} \thinspace \thinspace i \in \mathbb{N},\\
&d(T^i(w),S^i(z)) \rightarrow 0 \quad \mbox{as} \thinspace \thinspace \thinspace i \rightarrow \infty.
\end{align*}
Then the set $\mbox{Coin}(T,S) \cap O_X^*(x_0)$ is nonempty and contains a maximal element.
\end{theorem}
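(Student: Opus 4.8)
The plan is to deduce Theorem \ref{theorem3} from Theorem \ref{theorem1} by passing to the dual preorder, exploiting the identity $\mathcal{C}^*(T,S,\preccurlyeq)=\mathcal{C}(T,S,\preccurlyeq^*)$ already recorded before Theorem \ref{theorem1}. The first thing I would verify is that $(X,d,\preccurlyeq^*)$ is again a preordered $s$-regular $b$-metric space. The pair $(X,d)$ is untouched, so only the $s$-regularity axiom needs checking for $\preccurlyeq^*$. If $x \preccurlyeq^* y \preccurlyeq^* z$, then $z \preccurlyeq y \preccurlyeq x$, and $s$-regularity of $\preccurlyeq$ together with the symmetry of $d$ gives $\max\{d(x,y),d(y,z)\}=\max\{d(y,x),d(z,y)\} \leq s^2 d(z,x)=s^2 d(x,z)$. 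Hence $s$-regularity is self-dual. This is the one place where the argument genuinely uses a structural property of $d$ (its symmetry) rather than pure order bookkeeping, and I expect it to be the only nontrivial point of the whole proof.

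Next I would build the translation dictionary carrying each hypothesis of Theorem \ref{theorem3}, phrased for $\preccurlyeq$, onto the corresponding hypothesis of Theorem \ref{theorem1}, phrased for $\preccurlyeq^*$. The assumption $T(x_0) \succcurlyeq S(x_0)$ reads $T(x_0) \preccurlyeq^* S(x_0)$. A self map is isotone for $\preccurlyeq$ if and only if it is isotone for $\preccurlyeq^*$, so condition (i) transfers verbatim. Replacing every occurrence of $\succcurlyeq$ by $\preccurlyeq^*$ in the definition of ``covers from above'' turns it into the definition of ``covers,'' so (ii) becomes the statement that $S$ covers $T$ on $O_X(x_0)$ in $(X,\preccurlyeq^*)$, where I use that $O_X^*(x_0)=\{x:x \preccurlyeq^* x_0\}$ is exactly $O_X(x_0)$ computed in $(X,\preccurlyeq^*)$. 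Likewise $\mathcal{C}^*(x_0,T,S,\preccurlyeq)=\mathcal{C}(x_0,T,S,\preccurlyeq^*)$, since the side conditions $C \subset O_X^*(x_0)$ and $S(C) \subset T(O_X^*(x_0))$ become $C \subset O_X(x_0)$ and $S(C) \subset T(O_X(x_0))$ under $\preccurlyeq^*$. For (iii), an upper bound of $C$ for $\preccurlyeq$ is a lower bound for $\preccurlyeq^*$, the relation $w \succcurlyeq T(w)$ becomes $w \preccurlyeq^* T(w)$, the chain $S^i(z) \succcurlyeq S(w) \succcurlyeq T(w)$ becomes $S^i(z) \preccurlyeq^* S(w) \preccurlyeq^* T(w)$, and the convergence $d(T^i(w),S^i(z)) \to 0$ is order-free. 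Thus hypothesis (iii) of Theorem \ref{theorem3} for $\preccurlyeq$ is precisely hypothesis (iii) of Theorem \ref{theorem1} for $\preccurlyeq^*$.

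With all three hypotheses verified for the triple $(X,d,\preccurlyeq^*)$, I would invoke Theorem \ref{theorem1} to conclude that $\mbox{Coin}(T,S) \cap O_X(x_0)$, computed in $(X,\preccurlyeq^*)$, is nonempty and contains a minimal element with respect to $\preccurlyeq^*$. Since $\mbox{Coin}(T,S)$ does not depend on the order, and since $O_X(x_0)$ in $(X,\preccurlyeq^*)$ coincides with $O_X^*(x_0)$ while a $\preccurlyeq^*$-minimal element is a $\preccurlyeq$-maximal element, this says exactly that $\mbox{Coin}(T,S) \cap O_X^*(x_0)$ is nonempty and contains a maximal element, which is the desired conclusion. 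The entire argument is order-theoretic bookkeeping once the self-duality of $s$-regularity is established; the only care required is to confirm that each notion appearing in Theorem \ref{theorem1}—covering, the chain classes, bounds, and minimality—dualizes in the expected manner, and each of these follows immediately from the relevant definition.
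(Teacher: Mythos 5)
Your proposal is correct and matches the paper's approach: the paper offers no written proof of Theorem \ref{theorem3} at all, introducing it only as ``the dual version of Theorem \ref{theorem1},'' which is precisely the passage to the dual preorder $\preccurlyeq^*$ that you carried out. Your explicit verification that $s$-regularity is self-dual (via the symmetry of $d$) and that covering, the chain classes $\mathcal{C}^*(x_0,T,S,\preccurlyeq)=\mathcal{C}(x_0,T,S,\preccurlyeq^*)$, bounds, and extremality all dualize correctly simply fills in the bookkeeping the paper leaves implicit.
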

Podoprikhin and Fomenko \cite{8} generalized the notion of order homotopy of isotone mappings introduced by Walker \cite{10} as follows:

Let $(X,\preccurlyeq)$ be a preordered set. A pair of  mappings $T,S:X \rightarrow X$ are said to be \emph{order homotopic} if there exists a finite sequence of mappings $H_i:X \rightarrow X$ $(i=1,2,\ldots,n)$ such that
$$T=H_0\preccurlyeq H_1 \succcurlyeq H_2 \preccurlyeq \ldots \succcurlyeq H_n=S,$$
where $H_i \preccurlyeq H_j$ if and only if $H_i(x) \preccurlyeq H_j (x)$ for all $x \in X$.
\begin{theorem}\label{theorem2}
Let $(X,\preccurlyeq)$ be a preordered $s$-regular $b$-metric space. Let $(T,S)$ and $(\Tilde{T},\Tilde{S})$ be a pair of self mappings on $X$. Suppose that there are homotopies $\{H_t\}_{0 \leq t \leq n}$ and $\{K_t\}_{0 \leq t \leq n}$ between these pair of mappings such that
\begin{align*}
T=H_0 \preccurlyeq H_1 \succcurlyeq H_2 \preccurlyeq \ldots \succcurlyeq H_n=\Tilde{T},\\
S=K_0 \succcurlyeq K_1 \preccurlyeq K_2 \succcurlyeq \ldots \preccurlyeq H_n=\Tilde{S}.
\end{align*}
Suppose that $x_0 \in \mbox{Coin}(T,S)$ and the following conditions are satisfied:

(i) for each odd $t$, $1 \leq t \leq n$, $K_t$ is isotone, the mapping $H_t$ covers the mapping $K_t$ and any chain $C \in \mathcal{C}(x_0,K_t,H_t,\preccurlyeq)$ has a lower bound $w \in X$ satisfying $w \preccurlyeq K_t(w)$ and there exists $z \in X$ such that for all $i \in \mathbb{N}$ we have $$H_t^i(z) \preccurlyeq H_t(w) \preccurlyeq K_t(w) \quad \mbox{and} \quad d(K_t^i(w),H_t^i(z)) \rightarrow \infty,$$

(ii) for each even $t$, $1 \leq t \leq n$, $H_t$ is isotone, the mapping $K_t$ covers the mapping $H_t$ and any chain $C' \in \mathcal{C}(x_0,H_t,K_t,\preccurlyeq)$ has a lower bound $w' \in X$ satisfying $w' \preccurlyeq H_t(w')$ and there exists $z' \in X$ such that for all $i \in \mathbb{N}$ we have $$K_t^i(z') \preccurlyeq K_t(w') \preccurlyeq H_t(w') \quad  \mbox{and} \quad d(H_t^i(w'),K_t^i(z')) \rightarrow \infty.$$
Then there exists a chain
$$x_0\succcurlyeq x_1 \succcurlyeq x_2 \succcurlyeq \ldots  \succcurlyeq x_n$$
such that for every $t$, $1 \leq t \leq n$, $x_t \in  \mbox{Coin}(H_t,K_t) \cap O_X(x_{t-1})$ and $x_t$ is a minimal element of the set $\mbox{Coin}(H_t,K_t) \cap O_X(x_{t-1})$.
\end{theorem}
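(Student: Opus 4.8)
The plan is to proceed by induction on $t$, constructing the points $x_1,\ldots,x_n$ one at a time, each by a single application of Theorem \ref{theorem1}. The base of the induction is the hypothesis $x_0 \in \mbox{Coin}(T,S)=\mbox{Coin}(H_0,K_0)$, so that $H_0(x_0)=K_0(x_0)$; and the inductive hypothesis at stage $t$ is that $x_{t-1}\in\mbox{Coin}(H_{t-1},K_{t-1})$, i.e.\ $H_{t-1}(x_{t-1})=K_{t-1}(x_{t-1})$, which is precisely the conclusion produced at the previous stage.

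The first key step at each stage is to read off the correct orientation inequality from the coincidence at stage $t-1$ together with the homotopy relations. From the two chains of comparisons one has $H_{t-1}\preccurlyeq H_t$ and $K_{t-1}\succcurlyeq K_t$ for odd $t$, and $H_{t-1}\succcurlyeq H_t$ and $K_{t-1}\preccurlyeq K_t$ for even $t$. For odd $t$, evaluating at $x_{t-1}$ and using $H_{t-1}(x_{t-1})=K_{t-1}(x_{t-1})$ together with transitivity gives $K_t(x_{t-1})\preccurlyeq H_t(x_{t-1})$; for even $t$ the same computation gives $H_t(x_{t-1})\preccurlyeq K_t(x_{t-1})$. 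These are exactly the initial inequalities demanded by Theorem \ref{theorem1}, once the roles are matched: there the isotone map plays the role of $T$ and the covering map the role of $S$, with hypothesis $T(x_0)\preccurlyeq S(x_0)$. Thus for odd $t$ I would take $T=K_t$ (isotone by (i)) and $S=H_t$ (covering by (i)), and for even $t$ take $T=H_t$ (isotone by (ii)) and $S=K_t$ (covering by (ii)); in both cases the orientation just derived is precisely $T(x_{t-1})\preccurlyeq S(x_{t-1})$.

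It then remains to supply the remaining hypotheses of Theorem \ref{theorem1} with base point $x_{t-1}$. The isotonicity and covering conditions are immediate from (i)/(ii) (full covering implies covering on $O_X(x_{t-1})$), and the chain/lower-bound requirement, condition (iii) of Theorem \ref{theorem1} with its convergence $d(T^i(w),S^i(z))\to 0$, is furnished by the corresponding lower-bound hypothesis in (i)/(ii). The one point requiring care is that (i) and (ii) state the chain condition over the family $\mathcal{C}(x_0,\cdot,\cdot,\preccurlyeq)$, whereas Theorem \ref{theorem1} at base point $x_{t-1}$ needs it over $\mathcal{C}(x_{t-1},\cdot,\cdot,\preccurlyeq)$. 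Here I would observe that the descending chain $x_0\succcurlyeq x_1\succcurlyeq\ldots\succcurlyeq x_{t-1}$ built so far, with transitivity, yields $O_X(x_{t-1})\subseteq O_X(x_0)$, whence $\mathcal{C}(x_{t-1},\cdot,\cdot,\preccurlyeq)\subseteq\mathcal{C}(x_0,\cdot,\cdot,\preccurlyeq)$; so the stated condition is in fact stronger than required and Theorem \ref{theorem1} applies verbatim. Its conclusion delivers $x_t\in\mbox{Coin}(H_t,K_t)\cap O_X(x_{t-1})$ minimal in this set, and $x_t\in O_X(x_{t-1})$ means $x_{t-1}\succcurlyeq x_t$; this both completes the inductive step and re-establishes the inductive hypothesis for the next stage, so that collecting the points produces the desired chain. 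I expect the main obstacle to be organizational rather than substantive: keeping straight, for each parity of $t$, which of $H_t,K_t$ is isotone and which covers, and correspondingly which way the orientation inequality points, so that the single engine Theorem \ref{theorem1} can be invoked uniformly across both cases.
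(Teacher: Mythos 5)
Your proposal is correct and follows essentially the same route as the paper: iterate Theorem \ref{theorem1}, deriving at each stage the orientation inequality $K_t(x_{t-1})\preccurlyeq H_t(x_{t-1})$ (odd $t$) or $H_t(x_{t-1})\preccurlyeq K_t(x_{t-1})$ (even $t$) from the previous coincidence and the homotopy relations, and using the inclusion $\mathcal{C}(x_{t-1},\cdot,\cdot,\preccurlyeq)\subseteq\mathcal{C}(x_0,\cdot,\cdot,\preccurlyeq)$ to transfer the chain hypotheses to the new base point. The paper writes out only the stages $t=1,2$ and says ``repeating this process,'' whereas you phrase it as a clean induction (and silently correct the statement's typo $d(K_t^i(w),H_t^i(z))\rightarrow\infty$, which should read $\rightarrow 0$); these are presentational, not substantive, differences.
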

\begin{proof}
Since $x_0 \in \mbox{Coin}(H_0,K_0)$, $K_1(x_0) \preccurlyeq K_0 (x_0)=H_0(x_0) \preccurlyeq H_1(x_0)$ which implies that $K_1(x_0) \preccurlyeq H_1(x_0)$. Using (i) we infer that all the conditions of Theorem \ref{theorem1} are satisfied. Therefore, there exists $x_1 \in  \mbox{Coin}(H_1,K_1)\cap O_X(x_0)$ such that $x_1$ is a minimal element of the set $ \mbox{Coin}(H_1,K_1) \cap O_X(x_0)$. Consider $H_2(x_1) \preccurlyeq H_1(x_1)=K_1(x_1) \preccurlyeq K_2(x_1)$ which gives $H_2(x_1) \preccurlyeq K_2(x_1)$. As $\mathcal{C}(x_1,H_2,K_2, \preccurlyeq) \subset \mathcal{C}(x_0,H_2,K_2, \preccurlyeq)$ and using (ii) we deduce that all the conditions of Theorem \ref{theorem1} are satisfied. Therefore, there exists $x_2 \in \mbox{Coin}(H_2,K_2) \cap O_X(x_1)$ such that $x_2$ is a minimal element of the set  $ \mbox{Coin}(H_2,K_2) \cap O_X(x_1)$. Repeating this process we obtain a chain $$x_0\succcurlyeq x_1 \succcurlyeq x_2 \succcurlyeq \ldots  \succcurlyeq x_n,$$ where  $x_t \in \mbox{Coin}(H_t,K_t) \cap O_X(x_{t-1})$ and $x_t$ is a minimal element of the set $ \mbox{Coin}(H_t,K_t) \cap O_X(x_{t-1})$, $1 \leq t \leq n$.
\end{proof}

The following result is an immediate consequence of Theorem \ref{theorem1} and Theorem \ref{theorem3}:
\begin{theorem}\label{theorem4}
Let $(X,d,\preccurlyeq)$ be a preordered $s$-regular $b$-metric space. Let $(T,S)$ and $(\Tilde{T},\Tilde{S})$ be a pair of self mappings on $X$. Suppose that there are homotopies $\{H_t\}_{0 \leq t \leq n}$ and $\{K_t\}_{0 \leq t \leq n}$ between these pair of mappings such that
\begin{align*}
T=H_0 \preccurlyeq H_1 \succcurlyeq H_2 \preccurlyeq \ldots \succcurlyeq H_n=\Tilde{T},\\
S=K_0 \succcurlyeq K_1 \preccurlyeq K_2 \succcurlyeq \ldots \preccurlyeq H_n=\Tilde{S}.
\end{align*}
Suppose that the mappings $H_t$ are isotone, $1 \leq t \leq n$, $x_0 \in \mbox{Coin}(T,S)$ and the following conditions are satisfied:

(i) for each odd $t$, $1 \leq t \leq n$, the mapping $K_t$ covers the mapping $H_t$ from the above and any chain $C \in \mathcal{C}^*(H_t,K_t,\preccurlyeq)$ has an upper bound $w \in X$ satisfying $w \succcurlyeq H_t(w) $ and there exists $z \in X$ such that for all $i \in \mathbb{N}$ we have $$K_t^i(z) \succcurlyeq K_t(w) \succcurlyeq H_t(w) \quad  \mbox{and} \quad d(H_t^i(w),K_t^i(z)) \rightarrow \infty,$$

(ii) for each even $t$, $1 \leq t \leq n$, the mapping $K_t$ covers the mapping $H_t$ and any chain $C \in \mathcal{C}(H_t,K_t,\preccurlyeq)$ has a lower bound $w' \in X$ satisfying $w' \preccurlyeq H_t(w')$ and there exists $z' \in X$ such that for all $i \in \mathbb{N}$ we have $$K_t^i(z') \preccurlyeq K_t(w') \preccurlyeq H_t(w') \quad \mbox{and} \quad  d(H_t^i(w'),K_t^i(z')) \rightarrow \infty.$$
Then there exists a fence
$$x_0 \preccurlyeq x_1 \succcurlyeq x_2 \preccurlyeq \ldots  \succcurlyeq x_n$$
such that for each odd $t$, $1 \leq t \leq n$, $x_t \in  \mbox{Coin}(H_t,K_t) \cap O_X^*(x_{t-1})$ and $x_t$ is a maximal element of the set $\mbox{Coin}(H_t,K_t) \cap O_X^*(x_{t-1})$ and for each even $t$, $1 \leq t \leq n$, $x_t \in \mbox{Coin}(H_t,K_t) \cap O_X(x_{t-1})$ and $x_t$ is a minimal element of the set $ \mbox{Coin}(H_t,K_t) \cap  O_X(x_{t-1})$. 
\end{theorem}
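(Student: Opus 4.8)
The plan is to induct on $t$ from $1$ to $n$, at each stage converting the coincidence relation satisfied at $x_{t-1}$ into the one-sided inequality needed to invoke Theorem~\ref{theorem3} (for odd $t$) or Theorem~\ref{theorem1} (for even $t$). The base point is $x_0 \in \mbox{Coin}(H_0,K_0)$, so $H_0(x_0)=K_0(x_0)$. First I would record how the homotopy directions behave with parity: reading off the two chains, for odd $t$ one has $H_{t-1}\preccurlyeq H_t$ and $K_{t-1}\succcurlyeq K_t$, whereas for even $t$ one has $H_{t-1}\succcurlyeq H_t$ and $K_{t-1}\preccurlyeq K_t$. Since every $H_t$ is assumed isotone, the isotonicity hypothesis of both Theorems~\ref{theorem1} and~\ref{theorem3} (with $T=H_t$) is available throughout.

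For the inductive step at an odd index $t$, suppose $x_{t-1}\in\mbox{Coin}(H_{t-1},K_{t-1})$. Evaluating the homotopy inequalities at $x_{t-1}$ and threading them through the coincidence equality gives
$$K_t(x_{t-1}) \preccurlyeq K_{t-1}(x_{t-1}) = H_{t-1}(x_{t-1}) \preccurlyeq H_t(x_{t-1}),$$
so $H_t(x_{t-1})\succcurlyeq K_t(x_{t-1})$. This is exactly the hypothesis $T(x_0)\succcurlyeq S(x_0)$ of Theorem~\ref{theorem3} with $(T,S)=(H_t,K_t)$ and base point $x_{t-1}$. Condition~(i) supplies the covering-from-above property and the chain/convergence requirement; since $\mathcal{C}^*(x_{t-1},H_t,K_t,\preccurlyeq)\subset\mathcal{C}^*(H_t,K_t,\preccurlyeq)$, the requirement posited for the whole set $\mathcal{C}^*(H_t,K_t,\preccurlyeq)$ immediately yields it for the localized set that Theorem~\ref{theorem3} actually uses. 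Theorem~\ref{theorem3} then delivers a maximal element $x_t\in\mbox{Coin}(H_t,K_t)\cap O_X^*(x_{t-1})$, and $x_t\in O_X^*(x_{t-1})$ means $x_{t-1}\preccurlyeq x_t$.

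The even step is the mirror image: from $x_{t-1}\in\mbox{Coin}(H_{t-1},K_{t-1})$ and the even-parity homotopy directions one obtains
$$H_t(x_{t-1}) \preccurlyeq H_{t-1}(x_{t-1}) = K_{t-1}(x_{t-1}) \preccurlyeq K_t(x_{t-1}),$$
i.e.\ $H_t(x_{t-1})\preccurlyeq K_t(x_{t-1})$, which is the hypothesis of Theorem~\ref{theorem1}. Using condition~(ii) together with $\mathcal{C}(x_{t-1},H_t,K_t,\preccurlyeq)\subset\mathcal{C}(H_t,K_t,\preccurlyeq)$ in the same way, Theorem~\ref{theorem1} produces a minimal element $x_t\in\mbox{Coin}(H_t,K_t)\cap O_X(x_{t-1})$ with $x_t\preccurlyeq x_{t-1}$. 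Alternating these two steps assembles the fence $x_0\preccurlyeq x_1\succcurlyeq x_2\preccurlyeq\cdots\succcurlyeq x_n$ with the claimed maximality (odd $t$) and minimality (even $t$) at each coordinate.

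I do not expect a genuinely hard step here, as the statement is advertised as an immediate consequence of Theorems~\ref{theorem1} and~\ref{theorem3}. The only point demanding care is the bookkeeping: matching the parity of $t$ to the correct homotopy direction, and hence to the correct one of the two theorems, and observing that the chain conditions in~(i) and~(ii) are imposed globally on $\mathcal{C}^*(H_t,K_t,\preccurlyeq)$ and $\mathcal{C}(H_t,K_t,\preccurlyeq)$, which is precisely what lets us discharge the localized hypotheses of those theorems at every base point $x_{t-1}$ with no further argument.
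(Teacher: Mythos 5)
Your proposal is correct and follows exactly the route the paper intends: the paper states Theorem~\ref{theorem4} as an immediate consequence of Theorems~\ref{theorem1} and~\ref{theorem3}, and your alternating induction (Theorem~\ref{theorem3} at odd $t$, Theorem~\ref{theorem1} at even $t$, with the coincidence equality at $x_{t-1}$ threaded through the homotopy inequalities) is precisely the argument pattern the paper itself carries out in its proof of Theorem~\ref{theorem2}. Your observation that the globally stated chain conditions on $\mathcal{C}^*(H_t,K_t,\preccurlyeq)$ and $\mathcal{C}(H_t,K_t,\preccurlyeq)$ automatically discharge the localized hypotheses at each base point $x_{t-1}$ (and your implicit reading of the paper's typo $d\rightarrow\infty$ as $d\rightarrow 0$) is exactly right.
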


Evidently, the identity mapping $I:X \rightarrow X$ covers (covers from above) any mapping $T:X \rightarrow X$. Therefore, we have the following fixed point result under a homotopy of self mappings:
\begin{theorem}\label{theorem5}
Let $(X,d,\preccurlyeq)$ be a preordered $s$-regular $b$-metric space. Let $T, \Tilde{T}:X \rightarrow X$ be isotone mappings and $\{H_t\}_{0\leq t \leq n}$ be homotopy between $T$ and $\Tilde{T}$ such that
$$T=H_0 \preccurlyeq H_1 \succcurlyeq H_2 \preccurlyeq \ldots \succcurlyeq H_n=\Tilde{T}.$$
Suppose that $x_0 \in \mbox{Fix}(T)$ and the following conditions are satisfied:

(i) for each odd $t$, $1 \leq t \leq n$, any chain $C \in \mathcal{C}^*(H_t,I, \preccurlyeq)$ has an upper bound $w \in X$ and there exists $z \in X$ such that for all $i \in \mathbb{N}$ we have $$z \succcurlyeq w \succcurlyeq H_t(w) \quad \mbox{and} \quad d(H_t^i(w),z) \rightarrow 0,$$

(ii) for each even $t$, $1 \leq t \leq n$, any chain $C \in \mathcal{C}(H_t,T,\preccurlyeq)$ has a lower bound $w' \in X$ and there exists $z' \in X$ such that for all $i \in \mathbb{N}$ we have $$z' \preccurlyeq w' \preccurlyeq H_t(w') \quad \mbox{and} \quad d(H_t^i(w'),z') \rightarrow 0.$$
Then there exists a fence
$$x_0 \preccurlyeq x_1 \succcurlyeq x_2 \preccurlyeq \ldots \succcurlyeq x_n$$
such that for each odd $t$, $1 \leq t \leq n$, $x_t \in \mbox{Fix}(H_t) \cap O_X^*(x_{t-1})$ and $x_t$ is a maximal element of the set $\mbox{Fix}(H_t) \cap O_X^*(x_{t-1})$ and for each even $t$, $1 \leq t \leq n$,  $x_t \in \mbox{Fix}(H_t) \cap O_X(x_{t-1})$ and $x_t$ is a minimal element of the set $\mbox{Fix}(H_t) \cap O_X(x_{t-1})$.
\end{theorem}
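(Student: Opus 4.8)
The plan is to obtain this statement as a direct specialization of Theorem \ref{theorem4}, taking the second family of mappings to be constantly equal to the identity. Concretely, I would put $S=\Tilde{S}=I$ and $K_t=I$ for every $t$ with $0\leq t\leq n$, so that the pairs $(T,S)=(T,I)$ and $(\Tilde{T},\Tilde{S})=(\Tilde{T},I)$ are joined by the given homotopy $\{H_t\}$ and by the trivial homotopy $\{K_t\}=\{I\}$. This choice is dictated by the remark preceding the statement: the identity map covers, and covers from above, every self mapping, so the covering clauses of Theorem \ref{theorem4} hold automatically once $K_t=I$.

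First I would confirm the structural hypotheses of Theorem \ref{theorem4} under this substitution. The $K$-homotopy $I=K_0\succcurlyeq K_1\preccurlyeq\cdots\preccurlyeq K_n=I$ holds by reflexivity of $\preccurlyeq$. Since $x_0\in\mbox{Fix}(T)=\mbox{Fix}(H_0)$ we have $H_0(x_0)=x_0=I(x_0)$, whence $x_0\in\mbox{Coin}(H_0,K_0)$, the initial coincidence point demanded by Theorem \ref{theorem4}. The covering requirements reduce to triviality: if $H_t(x)\preccurlyeq x$ then $y=H_t(x)$ satisfies $y\preccurlyeq x$ and $I(y)=H_t(x)$, giving ``$K_t$ covers $H_t$'', and dually $H_t(x)\succcurlyeq x$ yields ``$K_t$ covers $H_t$ from above''. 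The isotonicity of the maps $H_t$ needed by Theorem \ref{theorem4} is part of the standing hypotheses.

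Next I would verify that the chain-and-convergence clauses of Theorem \ref{theorem4} collapse exactly onto conditions (i) and (ii) stated here. For odd $t$ the upper-bound clause requires $w\succcurlyeq H_t(w)$ and some $z$ with $K_t^i(z)\succcurlyeq K_t(w)\succcurlyeq H_t(w)$ and $d(H_t^i(w),K_t^i(z))\to 0$; since $K_t=I$ forces $K_t^i(z)=z$ and $K_t(w)=w$, this is precisely $z\succcurlyeq w\succcurlyeq H_t(w)$ together with $d(H_t^i(w),z)\to 0$, i.e. condition (i). The even case reduces to condition (ii) in the same manner, and the chain hypotheses are imposed on exactly the classes $\mathcal{C}^*(H_t,I,\preccurlyeq)$ and $\mathcal{C}(H_t,I,\preccurlyeq)$ required by Theorem \ref{theorem4}.

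Finally, invoking Theorem \ref{theorem4} produces a fence $x_0\preccurlyeq x_1\succcurlyeq x_2\preccurlyeq\cdots\succcurlyeq x_n$ in which, for even $t$, $x_t$ is a minimal element of $\mbox{Coin}(H_t,K_t)\cap O_X(x_{t-1})$ and, for odd $t$, a maximal element of $\mbox{Coin}(H_t,K_t)\cap O_X^*(x_{t-1})$. Because $\mbox{Coin}(H_t,I)=\{x:H_t(x)=x\}=\mbox{Fix}(H_t)$, each $x_t$ is a fixed point of $H_t$ with the claimed extremality, which is the assertion. I expect no analytic obstacle here, since all the real work is already contained in Theorems \ref{theorem1} and \ref{theorem3}; the only thing needing care is the bookkeeping that every two-sided order/distance condition of Theorem \ref{theorem4} genuinely degenerates, via $I^i(z)=z$ and $I(w)=w$, to the one-sided condition recorded in (i) and (ii).
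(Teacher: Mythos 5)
Your proposal is exactly the paper's own derivation: the paper gives no separate proof of Theorem \ref{theorem5}, presenting it as an immediate consequence of Theorem \ref{theorem4} with $S=\Tilde{S}=K_t=I$, justified by the preceding remark that the identity mapping covers (and covers from above) any self mapping, together with the identification $\mbox{Coin}(H_t,I)=\mbox{Fix}(H_t)$ and the collapse $I^i(z)=z$, $I(w)=w$ of the order/distance conditions, just as you describe. The one wrinkle is your claim that isotonicity of every $H_t$ is ``part of the standing hypotheses'': the statement literally assumes only $T=H_0$ and $\Tilde{T}=H_n$ are isotone, while Theorem \ref{theorem4} needs each intermediate $H_t$ to be isotone as well --- but this elision is present in the paper's own reduction (and its illustrative example does take all $H_t$ isotone), so it is a defect inherited from the paper's formulation rather than a flaw specific to your argument.
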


\begin{remark}
\emph{In Theorems \ref{theorem1}-\ref{theorem5} if we assume $(X,d,\preccurlyeq)$ is a preordered  regular metric space, then also the results hold.}
\end{remark}

 We conclude this section with the following example to illustrate the efficiency of our results: 
\begin{example}
\emph{Let $X=[0,2]$ and define $d: X \times X \rightarrow [0, \infty)$ by $d(x,y)=(x-y)^2$. Then $(X,d)$ is a $b$-metric space with coefficient $s=2$ and $d$ is $2$-regular.  Suppose that $X$ is equipped with a preorder given by: for $x,y \in X$,  $x \preccurlyeq y$ if and only if $x \leq y$. Define $T,H,\Tilde{T}: X \rightarrow X$ by
\begin{equation*}
\begin{split}
T(x)&= \frac{x^3}{5},\\\
H(x)&=\left\{\begin{array}{ll}
0.2,& \mbox{if}\thinspace \thinspace 0 \leq x \leq 0.2,\\
x, &\mbox{if} \thinspace \thinspace 0.2 \leq x \leq 1,\\
\frac{x^2+3}{4}, &\mbox{if} \thinspace \thinspace 1 \leq x \leq 2,
\end{array}
\right.\\\
\Tilde{T}(x)&=\left\{\begin{array}{ll}
\frac{x}{2}+0.1,& \mbox{if}\thinspace \thinspace 0 \leq x \leq 0.2,\\
x, &\mbox{if} \thinspace \thinspace 0.2 \leq x \leq 0.5,\\
\frac{2x+1}{4}, &\mbox{if} \thinspace \thinspace 0.5 \leq x \leq 2.
\end{array}
\right. 
\end{split}
\end{equation*}
Then $T \preccurlyeq H \succcurlyeq \Tilde{T}$. 
Clearly, $x_0=0 \in \mbox{Fix}(T)$ and $O_X^*(x_0)=[0,2]$. Also, every chain $C \in \mathcal{C}^*(H,I,\preccurlyeq)$ has an upper bound $w=1$ and there exists $z=1+\frac{1}{i} \in X$, $i \in \mathbb{N}$ such that $$ 1+\frac{1}{i} \succcurlyeq 1 \succcurlyeq H(1)$$ and $$d(H^i(w),z)=d\Big(1,1+\frac{1}{i} \Big)=\frac{1}{i^2} \rightarrow 0 \quad \mbox{as} \thinspace \thinspace \thinspace  i \rightarrow \infty.$$ Also, every chain $C \in \mathcal{C}(\Tilde{T},I,\preccurlyeq)$ has a lower bound $w'=0.2$ and $z'=0.2-\frac{i}{2i^2+4} \in X$, $i \in \mathbb{N}$ such that $$0.2-\frac{i}{2i^2+4} \preccurlyeq 0.2 \preccurlyeq \Tilde{T}(0.2)$$ and $$d(\Tilde{T}^i(w'),z')=d\Big(0.2,0.2-\frac{i}{2i^2+4}\Big)=\frac{i^2}{(2i^2+4)^2} \rightarrow 0 \quad \mbox{as} \thinspace \thinspace \thinspace  i \rightarrow \infty.$$ Then all the conditions of Theorem \ref{theorem5} are satisfied for a preordered $2$-regular $b$-metric space. Therefore, we get a fence $x_0 \preccurlyeq x_1 \succcurlyeq x_2$, where $x_0=0$, $x_1=1$ and $x_2=0.2$. It is observed that $x_1 \in  \mbox{Fix}(H) \cap O_X^*(x_0)$ and $x_1$ is a maximal element of the set $\mbox{Fix}(H) \cap O_X^*(x_0)$. Also,  $O_X(x_1)=[0,1]$, $x_2 \in \mbox{Fix}(\Tilde{T}) \cap O_X(x_1)$ and $x_2$ is a minimal element of the set $\mbox{Fix}(\Tilde{T}) \cap O_X(x_1)$.}
\end{example}
\section{Common Fixed Point Results}
In this section we prove some common fixed point results under a homotopy of families of mappings in preordered $s$-regular $b$-metric spaces.

Fomenko and Podoprikhin \cite{5} introduced the notion of concordantly isotone mappings as follows:

Let $(X,\preccurlyeq)$ be a preordered set, $\mathcal{I}$ be a nonempty set and a  family of mappings $\mathcal{F}=\{f_{\alpha}\}_{\alpha \in \mathcal{I}}$, where $f_{\alpha}:X \rightarrow X$ for all $\alpha \in \mathcal{I}$. The family $\mathcal{F}$ is said to be \emph{condordantly isotone} if for all $x,y \in X$
$$x \prec y \quad \mbox{implies} \quad f_{\alpha}(x) \preccurlyeq f_{\beta}(y) \thinspace \thinspace \thinspace \mbox{for all} \thinspace \thinspace  \alpha,\beta \in \mathcal{I}.$$

Let $\mathcal{C}_1(\mathcal{F},\preccurlyeq)$ denote the set of chains $C \subset \bigcup\limits_{\alpha \in \mathcal{I}}f_{\alpha}(X)$ such that

(i) $f_{\alpha}(x) \preccurlyeq x$ for all $x \in C$ and $\alpha \in \mathcal{I}$,

(ii) $x \prec y$ implies $x \preccurlyeq f_{\alpha}(y)$ for all $x,y \in C$ and $\alpha \in \mathcal{I}$.

Let $\mathcal{C}_1^*(\mathcal{F},\preccurlyeq)$ denote the set of chains $C \subset \bigcup\limits_{\alpha \in \mathcal{I}}f_{\alpha}(X)$ such that

(i) $f_{\alpha}(x) \succcurlyeq x$ for all $x \in C$ and $\alpha \in \mathcal{I}$,

(ii) $x \prec y$ implies $f_{\alpha}(x) \preccurlyeq y$ for all $x,y \in C$ and $\alpha \in \mathcal{I}$.

Observe that $\mathcal{C}_1^*(\mathcal{F},\preccurlyeq)= \mathcal{C}_1(\mathcal{F},\preccurlyeq^*)$. For a fixed element $x_0 \in X$ define the set
\begin{align*}
\mathcal{C}_1(x_0, \mathcal{F},\preccurlyeq)&:=\{C \in \mathcal{C}_1(\mathcal{F},\preccurlyeq):C \subset O_X(x_0) \cap \bigcup\limits_{\alpha \in \mathcal{I}}f_{\alpha}(O_X(x_0)) \},\\
\mathcal{C}_1^*(x_0, \mathcal{F},\preccurlyeq)&:=\{C \in \mathcal{C}_1^*(\mathcal{F},\preccurlyeq):C \subset O_X^*(x_0) \cap \bigcup\limits_{\alpha \in \mathcal{I}}f_{\alpha}(O_X^*(x_0)) \},\\
\mbox{ComFix}(\mathcal{F})&:= \{ x \in X: f_{\alpha}(x)=x \thinspace \thinspace \mbox{for all} \thinspace \thinspace \alpha \in \mathcal{I} \}.
\end{align*}
\begin{theorem}\label{theorem6}
Let $(X,d,\preccurlyeq)$ be a preordered $s$-regular $b$-metric space in which a point $x_0$ is fixed and $\mathcal{F}=\{f_{\alpha}\}_{\alpha \in \mathcal{I}}$ be a concordantly isotone family of mappings verifying $f_{\alpha}(x_0) \preccurlyeq x_0$ for all $\alpha \in \mathcal{I}$. Suppose that for any chain $C \in \mathcal{C}_1(x_0,\mathcal{F},\preccurlyeq)$ there exists $w \in X$ such that $w$ is a common lower bound of the chains $f_{\alpha}(C)$ for each $\alpha \in \mathcal{I}$ and there exists $z \in X$ and $\beta \in \mathcal{I}$ satisfying
\begin{align*}
&f_{\alpha}(w) \preccurlyeq w \preccurlyeq f_{\beta}^i(z),\\
&d(f_{\alpha}^i(w), f_{\beta}^i(z)) \rightarrow 0.
\end{align*}
Then the set $\mbox{ComFix}(\mathcal{F}) \cap O_X(x_0)$ is nonempty and contains a minimal element.
\end{theorem}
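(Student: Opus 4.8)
The plan is to follow the four-step scheme of the proof of Theorem \ref{theorem1}, now carrying the universal quantifier over $\alpha \in \mathcal{I}$ through each step and using concordant isotonicity wherever that proof used isotonicity of a single mapping. First I would set
$$V = \Big\{ x \in O_X(x_0) \cap \bigcup_{\alpha \in \mathcal{I}} f_\alpha(O_X(x_0)) : f_\alpha(x) \preccurlyeq x \ \text{for all}\ \alpha \in \mathcal{I} \Big\}$$
and show $V \neq \emptyset$. Fixing $\gamma \in \mathcal{I}$ and putting $x_1 = f_\gamma(x_0)$, the hypothesis $f_\gamma(x_0) \preccurlyeq x_0$ gives $x_1 \in O_X(x_0) \cap f_\gamma(O_X(x_0))$; if $x_1 = x_0$ then $f_\alpha(x_1) \preccurlyeq x_0 = x_1$, while if $x_1 \prec x_0$ then concordant isotonicity yields $f_\alpha(x_1) \preccurlyeq f_\gamma(x_0) = x_1$ for every $\alpha$, so in either case $x_1 \in V$. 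On $V$ I would define $v_1 \sqsubseteq v_2$ to mean $v_1 = v_2$, or $v_1 \prec v_2$ together with $v_1 \preccurlyeq f_\alpha(v_2)$ for all $\alpha \in \mathcal{I}$; reflexivity and antisymmetry are immediate, and transitivity follows because for $v_2 \in V$ one has $v_1 \preccurlyeq f_\alpha(v_2) \preccurlyeq v_2 \preccurlyeq f_\alpha(v_3)$ for each $\alpha$. The Hausdorff maximal principle then furnishes a $\sqsubseteq$-maximal chain $C \subseteq V$.

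Next I would verify $C \in \mathcal{C}_1(x_0, \mathcal{F}, \preccurlyeq)$: membership in $O_X(x_0) \cap \bigcup_{\alpha} f_\alpha(O_X(x_0))$ and condition (i) hold by the definition of $V$, while condition (ii) holds since $x \prec y$ in the chain $C$ forces $x \sqsubseteq y$, hence $x \preccurlyeq f_\alpha(y)$ for all $\alpha$. The hypothesis then provides a common lower bound $w$ of every $f_\alpha(C)$ together with $z$ and $\beta$ as stated. Since $V \neq \emptyset$ the maximal chain $C$ is nonempty, so choosing $x \in C$ gives $w \preccurlyeq f_\alpha(x) \preccurlyeq x \preccurlyeq x_0$, whence $w \in O_X(x_0)$.

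For the common fixed point step I would first establish the descending chain $f_\alpha^{i+1}(w) \preccurlyeq f_\alpha^i(w) \preccurlyeq \cdots \preccurlyeq f_\alpha(w) \preccurlyeq w$ for each $\alpha$, obtained inductively from $f_\alpha(w) \preccurlyeq w$ by concordant isotonicity. Combined with $w \preccurlyeq f_\beta^i(z)$, this yields for $i \geq 1$ the two chains $f_\alpha^i(w) \preccurlyeq w \preccurlyeq f_\beta^i(z)$ and $f_\alpha^i(w) \preccurlyeq f_\alpha(w) \preccurlyeq w$, so two applications of $s$-regularity give
$$d(f_\alpha(w), w) \leq s^2 d(f_\alpha^i(w), w) \leq s^4 d(f_\alpha^i(w), f_\beta^i(z)).$$
Letting $i \to \infty$ forces $d(f_\alpha(w), w) = 0$, i.e. $f_\alpha(w) = w$ for every $\alpha$, so $w \in \mbox{ComFix}(\mathcal{F}) \cap O_X(x_0)$.

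For minimality I would suppose, for contradiction, that $u \in \mbox{ComFix}(\mathcal{F}) \cap O_X(x_0)$ with $u \prec w$. Then $f_\alpha(u) = u \preccurlyeq u$ and $u = f_\alpha(u) \in f_\alpha(O_X(x_0))$ place $u$ in $V$; and for any $x \in C$ one has $u \prec w \preccurlyeq x$, so $u \prec x$ and concordant isotonicity gives $u = f_\gamma(u) \preccurlyeq f_\alpha(x)$ for all $\alpha$, whence $u \sqsubseteq x$. Thus $C \cup \{u\}$ is a $\sqsubseteq$-chain, and maximality of $C$ forces $u \in C$, so the lower bound $w$ satisfies $w \preccurlyeq u$, contradicting $u \prec w$. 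The main obstacle is not the metric estimate, which is in fact cleaner than in Theorem \ref{theorem1} since the sandwich $f_\alpha^i(w) \preccurlyeq w \preccurlyeq f_\beta^i(z)$ collapses everything to two uses of $s$-regularity; rather it is the bookkeeping of the family, namely verifying that the single witness $w$ (a common lower bound for all $f_\alpha(C)$ at once) together with the single pair $(z,\beta)$ suffices to kill $d(f_\alpha(w), w)$ for every $\alpha$, and that concordant isotonicity legitimately replaces the isotonicity of $T$ at each place where that was invoked.
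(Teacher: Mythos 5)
Your proposal is correct, but its key construction differs from the paper's. The paper never introduces the set $V$ or the auxiliary order $\sqsubseteq$ for this theorem: instead it partially orders the collection of chains $\mathcal{C}_1(x_0,\mathcal{F},\preccurlyeq)$ itself by inclusion, checks that the singleton $\{f_\alpha(x_0)\}$ belongs to this collection, and extracts a member $C$ that is maximal with respect to inclusion and contains $\{f_\alpha(x_0)\}$; minimality of $w$ is then a one-line contradiction, since a competitor $v \prec w$ would make $C \cup \{v\}$ a strictly larger member of $\mathcal{C}_1(x_0,\mathcal{F},\preccurlyeq)$, and the anchor $f_\alpha(x_0) \in C$ gives $w \preccurlyeq f_\alpha(f_\alpha(x_0)) \preccurlyeq f_\alpha(x_0) \preccurlyeq x_0$ directly. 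You instead transplant the scheme of Theorem \ref{theorem1}: a maximal $\sqsubseteq$-chain inside $V$, verification that it lies in $\mathcal{C}_1(x_0,\mathcal{F},\preccurlyeq)$, and a minimality argument in which maximality forces $u \in C$ and hence $w \preccurlyeq u$. Both routes are sound. Yours is a literally correct application of the Hausdorff maximal principle (a maximal chain in the poset $(V,\sqsubseteq)$), whereas the paper's invocation is really Zorn's lemma in disguise (one must pass to the union of a maximal chain of chains); yours also unifies Theorems \ref{theorem1} and \ref{theorem6} under a single template, at the cost of extra bookkeeping ($V$, $\sqsubseteq$, nonemptiness of $C$) that the inclusion-order approach avoids. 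Your metric estimate is also different and slightly cleaner: the paper applies the $b$-triangle inequality once and $s$-regularity twice to obtain $d(f_\alpha(w),w) \leq 2s^3\, d(f_\alpha^i(w),f_\beta^i(z))$, while you chain two $s$-regularity bounds through the sandwich $f_\alpha^i(w) \preccurlyeq f_\alpha(w) \preccurlyeq w \preccurlyeq f_\beta^i(z)$ to get the constant $s^4$ with no triangle inequality at all; either bound tends to $0$, so both close the argument. Finally, your nonemptiness step (splitting the case $x_1 = x_0$ from $x_1 \prec x_0$) is actually more careful than the paper's, which applies concordant isotonicity to the non-strict inequality $f_\alpha(x_0) \preccurlyeq x_0$ without comment; note that both your final contradiction and the paper's implicitly read $u \prec w$ as ``$u \preccurlyeq w$ and not $w \preccurlyeq u$,'' the appropriate strict part of a preorder.
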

\begin{proof}
The proof is divided into three steps where the existence of an element in $\mbox{ComFix}(\mathcal{F}) \cap O_X(x_0)$ is estalished in Steps $1$ and $2$ and its minimality is proven in Step $3$. \\
\textbf{Step 1} We show that $\mathcal{C}_1(x_0, \mathcal{F},\preccurlyeq)$ is nonempty. Since $f_{\alpha} (x_0) \preccurlyeq x_0$, $$f_{\alpha} (x_0) \in O_X(x_0) \cap  \bigcup\limits_{ \alpha \in \mathcal{I}} f_{\alpha} (O_X(x_0)).$$ Also, $f_{\alpha}( x_0) \preccurlyeq x_0$ and $\mathcal{F}$ is concordantly isotone  implies that $f_{\beta}(f_{\alpha}(x_0)) \preccurlyeq f_{\alpha}(x_0)$ for all $\beta \in \mathcal{I}$. Therefore, $\{f_{\alpha}(x_0)\} \in \mathcal{C}_1(x_0,\mathcal{F},\preccurlyeq)$ which gives $\mathcal{C}_1(x_0,\mathcal{F},\preccurlyeq)$ is nonempty. Define an ordered relation on $\mathcal{C}_1(x_0, \mathcal{F},\preccurlyeq)$ as follows: $C_1 \trianglerighteq  C_2$ if and only if $C_1 \subset C_2$. It is easily seen that $\trianglerighteq$ is a partial order on $C_1(x_0, \mathcal{F},\preccurlyeq)$. Then by Hausdorff maximal principle, there exists a maximal chain $C$ in $\mathcal{C}_(x_0, \mathcal{F}, \preccurlyeq)$ containing $\{f_{\alpha}(x_0)\}$ with respect to the order $\trianglerighteq$. By our assumption there exists $w \in X$ such that $w$ is a common lower bound of the chains $f_{\alpha}(C)$ for each $\alpha \in \mathcal{I}$ and there exists $z \in X$ and $\beta \in \mathcal{I}$ satisfying
\begin{align*}
&f_{\alpha}(w) \preccurlyeq w \preccurlyeq f_{\beta}^i(z)\\
&d(f_{\alpha}^i(w), f_{\beta}^i(z)) \rightarrow 0.
\end{align*}
\textbf{Step 2} In this step we show that $w \in \mbox{ComFix}(\mathcal{F}) \cap O_X(x_0)$. Since $w$ is a common lower bound of the chains $f_{\alpha}(C)$ for all $\alpha \in \mathcal{I}$. Then $w \preccurlyeq f_{\alpha}(x)$ for all $x \in C$ and $\alpha \in \mathcal{I}$. As $f_{\alpha}(x_0) \in C$, 
$$w \preccurlyeq f_{\alpha}(f_{\alpha}(x_0)) \preccurlyeq f_{\alpha}(x_0) \preccurlyeq x_0.$$
Then transitivity of $\preccurlyeq$ implies that $w \in O_X(x_0)$. Consider 
\begin{equation}\label{equation3}
 d(f_{\alpha}(w),w) \leq s d(f_{\alpha}(w), f_{\beta}^i(z))+sd(f_{\beta}^i(z),w).
 \end{equation}
 Since $f_{\alpha}(w) \preccurlyeq w$ and $\mathcal{F}$ is concordantly isotone, $f_{\alpha}(f_{\alpha}(w))\preccurlyeq f_{\alpha}(w)$. Proceeding likewise we have $f_{\alpha}^i(w) \preccurlyeq f_{\alpha}(w) \preccurlyeq w$ for all $i \in \mathbb{N}$. Therefore, for all $i \in \mathbb{N}$ 
 $$f_{\alpha}^i(w) \preccurlyeq f_{\alpha}(w) \preccurlyeq f_{\beta}^i(z).$$
 As $d$ is $s$-regular, $d(f_{\alpha}(w),f_{\beta}^i(z)) \leq s^2 d(f_{\alpha}^i(w),f_{\beta}^i(z))$. Also, $f_{\alpha}^i(w) \preccurlyeq w \preccurlyeq f_{\beta}^i(z)$ and $d$ is $s$-regular gives $d(w, f_{\beta}^i(z)) \leq s^2d(f_{\alpha}^i(w),f_{\beta}^i(z))$. Therefore, (\ref{equation3}) becomes
 $$d(f_{\alpha}(w),w) \leq 2s^3 d(f_{\alpha}^i(w),f_{\beta}^i(z)).$$
 Letting $i \rightarrow \infty$ we get $f_{\alpha}(w)=w$ for all $\alpha \in \mathcal{I}$. Therefore, $w \in \mbox{ComFix}(\mathcal{F}) \cap O_X(x_0)$.\\
 \textbf{Step 3} We claim that $w$ is a minimal element of the set $\mbox{ComFix}(\mathcal{F}) \cap O_X(x_0)$. Assume on the contrary, there exists $v \in \mbox{ComFix}(\mathcal{F}) \cap O_X(x_0)$ satisfying $v \prec w$. As $v=f_{\alpha}(v) \in \bigcup\limits_{\alpha \in \mathcal{I}}f_{\alpha}(O_X(x_0))$, $v \in O_X(x_0) \cap \bigcup\limits_{\alpha \in \mathcal{I}}f_{\alpha}(O_X(x_0))$. Since $\preccurlyeq$ is reflexive, $f_{\alpha}(v) \preccurlyeq v$ for all $\alpha \in \mathcal{I}$. Therefore, $f_{\alpha}(x) \preccurlyeq x$ for all $x \in C \cup \{v\}$ and $\alpha \in \mathcal{I}$. As $w$ is a common lower bound of the chains $f_{\alpha}(C)$ and $v \prec w$, $v \preccurlyeq w \preccurlyeq f_{\alpha}(x) \preccurlyeq x$ for all $x \in C$ and $\alpha \in \mathcal{I}$. Then transitivity of $\preccurlyeq$ implies that $v \preccurlyeq x$ for all $x \in C$. Since $\mathcal{F}$ is concordantly isotone, $f_{\beta}(v) \preccurlyeq f_{\alpha}(x)$ which gives $v \preccurlyeq f_{\alpha}(x)$ for all $x \in C$ and $\alpha \in \mathcal{I}$. Therefore, $C \cup \{v\}$ is a chain in $\mathcal{C}_1(x_0, \mathcal{F},\preccurlyeq)$ but this contradicts the maximality of $C$. Hence, $w$ is a minimal element of the set $\mbox{ComFix}(\mathcal{F}) \cap O_X(x_0)$.` 
\end{proof}

The dual version of Theorem \ref{theorem6} can be stated as follows:
\begin{theorem}\label{theorem7}
Let $(X,d,\preccurlyeq)$ be a preordered $s$-regular $b$-metric space in which a point $x_0$ is fixed and $\mathcal{F}=\{f_{\alpha}\}_{\alpha \in \mathcal{I}}$ be a concordantly isotone family of mappings verifying $f_{\alpha}(x_0) \succcurlyeq x_0$ for all $\alpha \in \mathcal{I}$. Suppose that for any chain $C \in \mathcal{C}_1^*(x_0\mathcal{F},\preccurlyeq)$ there exists $w \in X$ such that $w$ is a common upper bound of the chains $f_{\alpha}(C)$ for each $\alpha \in \mathcal{I}$ and there exists $z \in X$ and $\beta \in \mathcal{I}$ satisfying
\begin{align*}
&f_{\alpha}(w) \succcurlyeq w \succcurlyeq f_{\beta}^i(z),\\
&d(f_{\alpha}^i(w), f_{\beta}^i(z)) \rightarrow 0.
\end{align*}
Then the set $\mbox{ComFix}(\mathcal{F}) \cap O_X^*(x_0)$ is nonempty and contains a maximal element.
\end{theorem}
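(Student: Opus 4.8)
The plan is to obtain Theorem \ref{theorem7} as a formal consequence of Theorem \ref{theorem6} by passing to the dual preorder $\preccurlyeq^*$, exactly as the paper's remark ``the dual version can be stated as follows'' suggests. Concretely, I would fix the same data $(X,d)$, $\mathcal{F}=\{f_\alpha\}_{\alpha\in\mathcal{I}}$ and $x_0$, replace $\preccurlyeq$ by $\preccurlyeq^*$ throughout, check that every hypothesis of Theorem \ref{theorem6} holds for $(X,d,\preccurlyeq^*)$, and then translate its conclusion back through the defining relation $x\preccurlyeq y\iff y\preccurlyeq^* x$.

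First I would verify that $(X,d,\preccurlyeq^*)$ is again a preordered $s$-regular $b$-metric space. The function $d$ is unchanged, so it remains a $b$-metric with the same coefficient $s$, and $\preccurlyeq^*$ is a preorder whenever $\preccurlyeq$ is. For $s$-regularity, note that $x\preccurlyeq^* y\preccurlyeq^* z$ is the same as $z\preccurlyeq y\preccurlyeq x$; applying $s$-regularity of $d$ with respect to $\preccurlyeq$ to the chain $z\preccurlyeq y\preccurlyeq x$ gives $\max\{d(z,y),d(y,x)\}\le s^2 d(z,x)$, and by symmetry of $d$ this is precisely $\max\{d(x,y),d(y,z)\}\le s^2 d(x,z)$, which is $s$-regularity for $\preccurlyeq^*$. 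Next I would translate the structural hypotheses. The inequality $f_\alpha(x_0)\succcurlyeq x_0$ reads $f_\alpha(x_0)\preccurlyeq^* x_0$, matching the hypothesis of Theorem \ref{theorem6}. Concordant isotonicity is self-dual: if $x\prec^* y$, i.e.\ $y\prec x$, then concordant isotonicity of $\mathcal{F}$ for $\preccurlyeq$ gives $f_\beta(y)\preccurlyeq f_\alpha(x)$, that is $f_\alpha(x)\preccurlyeq^* f_\beta(y)$, for all $\alpha,\beta$, so $\mathcal{F}$ is concordantly isotone for $\preccurlyeq^*$ as well.

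Using the identities already recorded in the excerpt, namely $\mathcal{C}_1^*(\mathcal{F},\preccurlyeq)=\mathcal{C}_1(\mathcal{F},\preccurlyeq^*)$ together with $O_X^*(x_0)=O_X(x_0)$ computed in $\preccurlyeq^*$, one gets the restricted equality $\mathcal{C}_1^*(x_0,\mathcal{F},\preccurlyeq)=\mathcal{C}_1(x_0,\mathcal{F},\preccurlyeq^*)$, so the chain-theoretic hypotheses correspond. A common upper bound $w$ of the chains $f_\alpha(C)$ with respect to $\preccurlyeq$ is a common lower bound with respect to $\preccurlyeq^*$; the iterate condition $f_\alpha(w)\succcurlyeq w\succcurlyeq f_\beta^i(z)$ becomes $f_\alpha(w)\preccurlyeq^* w\preccurlyeq^* f_\beta^i(z)$; and the metric condition $d(f_\alpha^i(w),f_\beta^i(z))\to 0$ is order-free, hence unchanged. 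Thus all hypotheses of Theorem \ref{theorem6} hold for $(X,d,\preccurlyeq^*)$.

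Applying Theorem \ref{theorem6} in $(X,d,\preccurlyeq^*)$ yields that $\mbox{ComFix}(\mathcal{F})\cap O_X(x_0)$, computed with $\preccurlyeq^*$, is nonempty and contains a minimal element (for $\preccurlyeq^*$). Since $\mbox{ComFix}(\mathcal{F})$ does not depend on the order, $O_X(x_0)$ for $\preccurlyeq^*$ equals $O_X^*(x_0)$ for $\preccurlyeq$, and a minimal element for $\preccurlyeq^*$ is exactly a maximal element for $\preccurlyeq$, this is precisely the assertion that $\mbox{ComFix}(\mathcal{F})\cap O_X^*(x_0)$ is nonempty and contains a maximal element. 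I expect the only genuinely delicate points to be the two self-duality checks above—that $s$-regularity transfers (which relies essentially on the symmetry of $d$) and that concordant isotonicity is preserved under $\preccurlyeq^*$—while the remaining correspondences are the bookkeeping already prepared by the definitions of $\mathcal{C}_1^*$ and $O_X^*$. An alternative, if one prefers a self-contained argument, is to mirror the three steps of the proof of Theorem \ref{theorem6} verbatim with all inequalities reversed; the duality route is cleaner and avoids repeating the $s$-regular estimate.
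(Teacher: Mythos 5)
Your proposal is correct and takes exactly the approach the paper intends: the paper states Theorem \ref{theorem7} without proof, as the ``dual version'' of Theorem \ref{theorem6}, and your argument is precisely that duality reduction, with the necessary self-duality checks (that $s$-regularity transfers via symmetry of $d$, that concordant isotonicity is preserved under $\preccurlyeq^*$, and that $\mathcal{C}_1^*(x_0,\mathcal{F},\preccurlyeq)=\mathcal{C}_1(x_0,\mathcal{F},\preccurlyeq^*)$ and minimal-for-$\preccurlyeq^*$ equals maximal-for-$\preccurlyeq$) all carried out correctly. Nothing is missing.
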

\begin{theorem}\label{theorem8}
Let $(X,d,\preccurlyeq)$ be a preordered $s$-regular $b$-metric space. Let $\mathcal{F}=\{ f_{\alpha} \}_{\alpha \in \mathcal{I}}$ and $\mathcal{G}=\{g_{\alpha}\}_{\alpha \in \mathcal{I}}$ be a pair of families of self mappings on $X$. Suppose that $\{H_{t,\alpha} \}_{0 \leq t \leq n}$ be a homotopy between $f_{\alpha}$ and $g_{\alpha}$ such that
$$f_{\alpha}=H_{0,\alpha} \preccurlyeq H_{1,\alpha} \succcurlyeq H_{2,\alpha} \preccurlyeq \ldots \succcurlyeq H_{n, \alpha}=g_{\alpha}.$$
Suppose that $x_0 \in \mbox{ComFix}(\mathcal{F})$ and the following conditions are satisfied:

(i) for each $t$, $1 \leq t \leq n$, the family $\mathcal{H}_t=\{H_{t,\alpha}\}_{\alpha \in \mathcal{I}}$ is concordantly isotone,

(ii) for each odd $t$, $1 \leq t \leq n$, for any chain $C \in \mathcal{C}_1^*(\mathcal{H}_t,\preccurlyeq)$ there exists $w \in X$ such that $w$ is a common upper bound of the chains $H_{t,\alpha}(C)$ for all $\alpha \in \mathcal{I}$ and there exists $z \in X$ and $\beta \in \mathcal{I}$ satisfying
\begin{align*}
&H_{t,\alpha}(w) \succcurlyeq w \succcurlyeq H_{t,\beta}^i(z)\\
&d(H_{t,\alpha}^i(w),H_{t,\beta}^i(z)) \rightarrow 0,
\end{align*} 

(iii) for each even $t$, $1 \leq t \leq n$, for any chain $C \in \mathcal{C}_1(\mathcal{H}_t,\preccurlyeq)$ there exists $w' \in X$ such that $w'$ is a common lower bound of the chains $H_{t,\alpha}(C)$ for all $\alpha \in \mathcal{I}$ and there exists $z' \in X$ and $\gamma \in \mathcal{I}$ satisfying
\begin{align*}
&H_{t,\alpha}(w') \preccurlyeq w' \preccurlyeq H_{t,\gamma}^i(z')\\
&d(H_{t,\alpha}^i(w'),H_{t,\gamma}^i(z')) \rightarrow 0,
\end{align*}
Then there exists a fence
$$x_0 \preccurlyeq x_1\succcurlyeq x_2 \preccurlyeq \ldots \succcurlyeq x_n$$
such that for each odd $t$, $1 \leq t \leq n$, $x_t \in \mbox{ComFix}(\mathcal{H}_t) \cap O_X^*(x_{t-1})$ and $x_t$ is a maximal element of the set $\mbox{ComFix}(\mathcal{H}_t) \cap O_X^*(x_{t-1})$ and for each even $t$, $1 \leq t \leq n$, $x_t \in \mbox{ComFix}(\mathcal{H}_t) \cap O_X(x_{t-1})$ and $x_t$ is a minimal element of the set $\mbox{ComFix}(\mathcal{H}_t) \cap O_X(x_{t-1})$.
\end{theorem}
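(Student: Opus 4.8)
The plan is to mirror the inductive argument used in the proof of Theorem \ref{theorem2}, but now to build the fence by alternately invoking Theorem \ref{theorem7} (to produce a maximal common fixed point in an up-set $O_X^*$) at the odd stages and Theorem \ref{theorem6} (to produce a minimal common fixed point in a down-set $O_X$) at the even stages. Here a single homotopy $\{H_{t,\alpha}\}$ replaces the two homotopies of the coincidence-point results, and the hypothesis $x_0 \in \mbox{ComFix}(\mathcal{F}) = \mbox{ComFix}(\mathcal{H}_0)$ furnishes the base of the induction, so that $H_{0,\alpha}(x_0)=x_0$ for all $\alpha \in \mathcal{I}$.

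The inductive hypothesis is that the fence has been constructed up to some $x_{t-1}$ with $x_{t-1} \in \mbox{ComFix}(\mathcal{H}_{t-1})$. The crucial bookkeeping step is to convert the homotopy inequality at the $(t-1)\to t$ transition into the order hypothesis demanded by the appropriate dual theorem. If $t$ is odd, the homotopy gives $H_{t-1,\alpha} \preccurlyeq H_{t,\alpha}$, whence $x_{t-1}=H_{t-1,\alpha}(x_{t-1}) \preccurlyeq H_{t,\alpha}(x_{t-1})$, that is $H_{t,\alpha}(x_{t-1}) \succcurlyeq x_{t-1}$ for all $\alpha$; this is precisely the standing hypothesis $f_\alpha(x_0)\succcurlyeq x_0$ of Theorem \ref{theorem7} with $\mathcal{F}$ taken to be $\mathcal{H}_t$ and base point $x_{t-1}$. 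If $t$ is even, the homotopy instead gives $H_{t-1,\alpha} \succcurlyeq H_{t,\alpha}$, hence $H_{t,\alpha}(x_{t-1}) \preccurlyeq x_{t-1}$, matching the hypothesis of Theorem \ref{theorem6}. I would then note that condition (i) supplies concordant isotonicity of $\mathcal{H}_t$ for every $t$; for odd $t$ condition (ii) supplies, for each chain in $\mathcal{C}_1^*(\mathcal{H}_t,\preccurlyeq)$, the common upper bound $w$ and the data $z,\beta$ with $H_{t,\alpha}(w)\succcurlyeq w \succcurlyeq H_{t,\beta}^i(z)$ and $d(H_{t,\alpha}^i(w),H_{t,\beta}^i(z)) \to 0$, and since $\mathcal{C}_1^*(x_{t-1},\mathcal{H}_t,\preccurlyeq) \subset \mathcal{C}_1^*(\mathcal{H}_t,\preccurlyeq)$ this unrestricted hypothesis a fortiori covers the $x_{t-1}$-restricted chains required by Theorem \ref{theorem7}. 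Thus Theorem \ref{theorem7} yields a maximal $x_t \in \mbox{ComFix}(\mathcal{H}_t) \cap O_X^*(x_{t-1})$, so $x_{t-1} \preccurlyeq x_t$; symmetrically, for even $t$ condition (iii) together with $\mathcal{C}_1(x_{t-1},\mathcal{H}_t,\preccurlyeq) \subset \mathcal{C}_1(\mathcal{H}_t,\preccurlyeq)$ lets Theorem \ref{theorem6} produce a minimal $x_t \in \mbox{ComFix}(\mathcal{H}_t) \cap O_X(x_{t-1})$, so $x_{t-1} \succcurlyeq x_t$. Iterating from $t=1$ to $t=n$ assembles the fence $x_0 \preccurlyeq x_1 \succcurlyeq x_2 \preccurlyeq \ldots \succcurlyeq x_n$ with the asserted extremality at each stage.

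Because Theorems \ref{theorem6} and \ref{theorem7} do all the analytic work, I expect no genuine obstacle; the proof is essentially parity-driven bookkeeping. The two points demanding care are, first, tracking the sign of each homotopy link to match it to the correct dual theorem, and second, confirming that the freshly produced $x_t$ indeed lies in $\mbox{ComFix}(\mathcal{H}_t)$ so that it is eligible to serve as the base point initiating the next stage. I would emphasize at the outset that the conditions (ii) and (iii) are stated for the \emph{unrestricted} chain families $\mathcal{C}_1^*(\mathcal{H}_t,\preccurlyeq)$ and $\mathcal{C}_1(\mathcal{H}_t,\preccurlyeq)$, which is exactly what allows us to avoid re-verifying the hypotheses against the shrinking order intervals $O_X^*(x_{t-1})$ and $O_X(x_{t-1})$ at each stage, the one place where the bookkeeping could otherwise become delicate.
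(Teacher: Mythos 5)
Your proposal is correct and follows essentially the same route as the paper's own proof: verify $H_{t,\alpha}(x_{t-1}) \succcurlyeq x_{t-1}$ (odd $t$) or $H_{t,\alpha}(x_{t-1}) \preccurlyeq x_{t-1}$ (even $t$) from the homotopy inequality and the fact that $x_{t-1} \in \mbox{ComFix}(\mathcal{H}_{t-1})$, then apply Theorem \ref{theorem7} or Theorem \ref{theorem6} respectively, using the inclusions $\mathcal{C}_1^*(x_{t-1},\mathcal{H}_t,\preccurlyeq) \subset \mathcal{C}_1^*(\mathcal{H}_t,\preccurlyeq)$ and $\mathcal{C}_1(x_{t-1},\mathcal{H}_t,\preccurlyeq) \subset \mathcal{C}_1(\mathcal{H}_t,\preccurlyeq)$ exactly as the paper does. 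Your explicit remark about why the unrestricted chain hypotheses suffice for the shrinking order intervals is the same observation the paper makes, just stated more carefully.
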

\begin{proof}
Since $x_0 \in \mbox{ComFix}(\mathcal{F})$, $H_{1,\alpha}(x_0) \succcurlyeq H_{0,\alpha}(x_0)=f_{\alpha}(x_0)= x_0$ for all $\alpha \in \mathcal{I}$. This gives $H_{1, \alpha}(x_0)\succcurlyeq x_0$ for all $\alpha \in \mathcal{I}$. Using $\mathcal{C}_1^*(x_0,\mathcal{H}_1,\preccurlyeq) \subset \mathcal{C}_1^*(\mathcal{H}_1,\preccurlyeq)$, (i) and (ii) we deduce that all the conditions of Theorem \ref{theorem7} are satisfied. Therefore, there exists $x_1 \in \mbox{ComFix}(\mathcal{H}_1) \cap O_X^*(x_0)$ such that  $x_1$ is a maximal element of the set $\mbox{ComFix}(\mathcal{H}_1) \cap O_X^*(x_0)$.  Also, $H_{2,\alpha} (x_1)\preccurlyeq H_{1,\alpha}(x_1)=x_1$ for all $\alpha \in \mathcal{I}$ and $\mathcal{C}_1(x_1,\mathcal{H}_2,\preccurlyeq) \subset \mathcal{C}_1(\mathcal{H}_2,\preccurlyeq)$. Using (i) and (iii) we conclude that all the conditions of Theorem \ref{theorem6} are satisfied. Therefore, there exists $x_2 \in \mbox{ComFix}(\mathcal{H}_2) \cap O_X(x_1)$ such that $x_2$ is a minimal element of the set $\mbox{ComFix}(\mathcal{H}_2) \cap O_X(x_1)$. Proceeding likewise we get a fence
$$x_0 \preccurlyeq x_1 \succcurlyeq x_2 \preccurlyeq \ldots \succcurlyeq x_n,$$
where for each odd $t$, $1 \leq t \leq n$, $x_t \in \mbox{ComFix}(\mathcal{H}_t) \cap O_X^*(x_{t-1})$ and $x_t$ is a maximal element of the set $\mbox{ComFix}(\mathcal{H}_t) \cap O_X^*(x_{t-1})$ and for each even $t$, $1 \leq t \leq n$, $x_t \in \mbox{ComFix}(\mathcal{H}_t) \cap O_X(x_{t-1})$ and $x_t$ is a minimal element of the set $\mbox{ComFix}(\mathcal{H}_t) \cap O_X(x_{t-1})$.
\end{proof}
\begin{remark}
\emph{It is observed that Theorems \ref{theorem6}-\ref{theorem8} hold if we take $(X,d,\preccurlyeq)$ to be a preordered regular metric space.} 
\end{remark}
\section*{Acknowledgements}
 $^*$corresponding author\\
 The corresponding author is supported by UGC Non-NET fellowship (Ref.No. Sch/139/Non-NET/  Math./Ph.D./2017-18/1028).

\textbf{Anuradha Gupta}\\
 Department of Mathematics, Delhi College of Arts and Commerce,\\
  University of Delhi, Netaji Nagar, \\
  New Delhi-110023, India.\\
  \vspace{0.2cm}
  email: dishna2@yahoo.in\\
  \textbf{Manu Rohilla}\\
  Department of Mathematics, University of Delhi, \\
  New Delhi-110007, India.\\
  email: manurohilla25994@gmail.com
\end{document}